\documentclass[10pt]{article} 
\usepackage{amssymb}
\usepackage{amsmath,amsfonts,amsthm}
\usepackage{color}
\usepackage{comment}

\let\svthefootnote\thefootnote
\textheight 1in
\newcommand\blankfootnote[1]{%
  \let\thefootnote\relax\footnotetext{#1}%
  \let\thefootnote\svthefootnote%
}
\let\svfootnote\footnote
\renewcommand\footnote[2][?]{%
  \if\relax#1\relax%
    \blankfootnote{#2}%
  \else%
    \if?#1\svfootnote{#2}\else\svfootnote[#1]{#2}\fi%
  \fi
}

\newtheorem{theorem}{Theorem}[section]
\newtheorem{lemma}[subsection]{Lemma}

\newtheorem{corollary}[theorem]{Corollary}
\newtheorem{remark}[theorem]{Remark}
\newtheorem{definition}[theorem]{Definition}

\newtheorem{conjecture}[theorem]{Conjecture}

\usepackage{fullpage}

\begin{document}

\numberwithin{equation}{section}

\title{On the fluid ball conjecture}

\author{Fernando Coutinho$^1$, Benedito Leandro$^2$ and Hiuri F. S. Reis $^3$}
\footnote[]{$^{1,2, 3}$Universidade Federal de Goiás, IME, CEP 74690-900, Goiânia, GO, Brazil.}
\footnote[]{$^1$Universidade do Estado do Amazonas, CEST, 1085, CEP 69552-315, Tefé, AM, Brazil.}
\footnote[]{$^3$Instituto Federal de Goiás, IFG, rua Formosa, loteamento Santana,	CEP 76400-000,	Uruaçu, GO, Brazil.}
\footnote[]{Email address: \textsf{fcoutinho@uea.edu.br$^1$, bleandroneto@ufg.br$^2$ and hiuri.reis@ifg.edu.br$^3$.}}
	 \footnote[]{Fernando Soares Coutinho was partially supported by PROPG-CAPES/FAPEAM.}

\date{}

\maketitle{}

\begin{abstract}
The fluid ball conjecture states that a static perfect fluid space-time is spherically symmetric. In this paper we construct a Robinson's divergence formula for the static perfect fluid space-time. Inspired by this conjecture, a rigidity result for the spatial factor of a static perfect fluid space-time satisfying some boundary conditions is proved, provided that an equation of state holds.
\end{abstract}

\vspace{0.2cm} \noindent \emph{2020 Mathematics Subject
Classification} : 53C21,  83C05, 83D05.

\vspace{0.4cm}\noindent \emph{Keywords}: Einstein equation, Asymptotically flat, Spherically symmetric.

\ 

\section{Introduction and Main Results}

The fluid ball conjecture (or Lichnerowicz’s conjecture) states that ``{\it a static stellar model is spherically symmetric}''. In \cite{andre1964}, Avez proved that a regular, stationary, complete, with Euclidean asymptotic behaviour, perfect fluid whose trajectories coincide with time lines, must be the Schwarzschild space-time. This problem was also discussed by Yau in the 1982 list of unsolved problems in General Relativity (cf. \cite{yau1982}, see also \cite{lindblom1980}). In fact, there is a whole family of related conjectures, depending on whether the extent of the fluid region is finite or infinite, and depending on the assumptions on the asymptotics of the space-time and on the equation of state (cf. \cite{beig1991,beig1992,heinzle2003,kunzle1971,lindblom1980,massod21987} and the references therein). The conjecture is proven under physically realistic conditions, but not in full generality.

This problem was widely explored by great scientists through the years and much progress has been made. Although it is considered physically evident, the most general situation for the proof of the fluid ball conjecture is still lacking. A natural idea is to consider some reasonable equation of state for the perfect fluid to show that the Riemanniann metric of the spatial factor of a static perfect fluid space-time is conformal to a metric having nonnegative scalar curvature and zero mass. Then we can invoke the rigidity part of the positive mass theorem to prove that the original Riemannian metric is conformally flat. It is known that conformal flatness implies spherical symmetry (cf. \cite{kunzle1971,lindblom1980,massod21987,massod1987,massod2007}). Another approach is to try to deduce some divergence formula (inspired by the Robinson's black hole uniqueness theorem \cite{robinson1977}) and to combine it with a good equation of state to get spherical symmetry \cite{beig1991,beig1992}.

Inspired by \cite{massod2007} and \cite{robinson1977}, our main goal is to provide a divergence formula for the static perfect fluid equations and to give a simple proof for the fluid ball conjecture proving that $(M^3,\,g)$ is conformally flat, considering a reasonable equation of state holds. It is well-known that an appropriate linear equation of state for the perfect fluid can lead us to proof the conjecture (cf. \cite{anderson2019,heinzle2003}), under some additional hypothesis.

	Static space-times are special and important global solutions to Einstein equations in general relativity. The static perfect fluid space-time is the Einstein equation 
	\begin{eqnarray*}\label{1}
	Ric_{\hat{g}}-\frac{R_{\hat{g}}}{2}\hat{g}=T
	\end{eqnarray*}
	with perfect fluid as a matter field and static space-time  $(\widehat{M}^{n+1},\hat{g})=M^{n}\times_{f}\mathbb{R}$, such that the warped metric (cf. the warped product formulas in \cite{oneill1983}) is given by
	\begin{eqnarray*}
	\hat{g}(x,\,t)=-f^{2}(x)dt^{2}+g(x),
	\end{eqnarray*}
	where $(x,\,t)\in M\times\mathbb{R}$ and $(M^{n},g)$, is an open, connected and oriented Riemannian manifold.
	 The energy-momentum stress tensor of a perfect fluid is $T=8\pi[(\mu+\rho)U_{i}U_{j}+\rho \hat{g}]$. Here, $Ric_{\hat{g}}$ and $R_{\hat{g}}$, stand for the Ricci tensor and the scalar curvature for the metric $\hat{g}$, respectively. Moreover, $\mu$ and $\rho$ are bounded measurable functions and $U_{i}$ is a unit timelike vector field. Note that $\mu$, $\rho$ and $f$ are independent of $t$. These functions are called the {\it density}, {\it pressure} and {\it lapse function}, respectively. In what follows, we characterize a {\it static perfect fluid space-time} (see \cite{beig1991,coutinho2020, kobayashi1980,leandro2019, massod1987} for instance).

\begin{definition} 
\label{defA} A Riemannian manifold $(M^{3},g)$ is said to be the spatial factor of a static perfect fluid space-time if there exist smooth functions $\mu,\,\rho,\,f>0$ on $M$ satisfying the static perfect fluid equations:
\begin{eqnarray}\label{principaleq}
    f{Ric}={\nabla^2}f+4\pi(\mu-\rho)fg
\end{eqnarray}
and
\begin{eqnarray}\label{lapla}
\Delta f=4\pi\left(\mu+3\rho\right)f,
\end{eqnarray} 
where ${R}ic$ and ${\nabla}^{2}$ stand for the Ricci and Hessian tensors for $g$, respectively. Here, $\Delta$ is the Laplacian operator for the metric tensor $g$.
\end{definition}

The above definition implies that the scalar curvature $R$ for the metric $g$ is given by
\begin{eqnarray}\label{scalarcurv}
R=16\pi\mu.
\end{eqnarray}
From \eqref{principaleq}, \eqref{lapla} and \eqref{scalarcurv} a solution for the perfect fluid equation must satisfy 
\begin{eqnarray}\label{eqsemtraco}
f\mathring{R}ic=\mathring{\nabla}^{2}f,
\end{eqnarray}
where $\mathring{R}ic$ and $\mathring{\nabla}^{2}$ stand for the traceless Ricci and Hessian tensors, respectively. Furthermore, when we assume $\mu=\rho=0$ everywhere, we are referring to the static vacuum Einstein space-time. The set $\{f=0\}$ is well known to be the horizon, i.e., the event horizon of a static vacuum Einstein black hole. We further remark that $\{f=0\}$ may be defined as the set of limit points of Cauchy sequences on $(M^n,\,g)$ on which $f$ converges to $0$. Moreover, it should be emphasized that it is expected that $\{f=0\}\neq\emptyset$, since a complete
three-dimensional static vacuum space-time such that $f > 0$ everywhere must be trivial, i.e., the warped function $f$ must be constant and the space-time must be the flat Minkowsky space–time.

Astronomical evidence also indicates that the universe can be modeled (in smoothed, averaged form) as a space-time containing a perfect fluid whose “molecules” are the galaxies. At present, the dominant contribution to the density of the galactic fluid is the mass of the galaxies, with a much smaller pressure due mostly to radiation (see \cite[p. 341]{oneill1983}). Considering such background, in \cite{leandro2019} the authors proved that $\mu=\rho=0$ in the set $\{f=0\}$, provided that the dominant energy condition holds (i.e., $\mu\geq\rho\geq0$). Furthermore, $\{f=0\}$ is a minimal hypersurface for the static perfect fluid space-time (cf. \cite{coutinho2020,leandro2019}). Assuming that $\{f=0\}\neq\emptyset$ we will prove that an asymptotically flat static perfect fluid space-time must be spherically symmetric, if a particular equation of state is satisfied.

In this work we will consider similar asymptotic conditions used by Agostiniani and Mazzieri, Beig and Simon, K\"unzle, Masood-ul-Alam and Robinson (cf. \cite{agostiniani2017,beig1991,beig1992,kunzle1971,massod21987,robinson2009}, respectively), which are defined as follows.

\begin{definition}\label{def2}
	A solution $(M^{3},\,g,\,f,\,\mu,\,\rho)$ for \eqref{principaleq} and \eqref{lapla} is said to be asymptotically flat with one end $E$ if $M$ minus a compact set $K$ is diffeomorphic to $\mathbb{R}^{3}$ minus a closed ball, and the metric $g$, the lapse function $f$, the energy $\mu$ and the pressure $\rho$ satisfy the following asymptotic expansions at infinity.
	\begin{itemize}
		\item[(I)] Let $r^{2}=\displaystyle\sum_{i=1}^{3}x_{i}^{2}$, $x=(x_{1},\,x_{2},\,x_{3})\in M$, $\delta$ be the flat metric in which $\eta_{ij}(x)=o(r^{-2})$ and $\partial_{l}\eta_{ij}=o(r^{-3})$, as $r\rightarrow\infty$,  $$g_{ij}(x)=\delta_{ij}(x)+\eta_{ij}(x),$$
		where $1\leq l,\,i,\,j\leq n$.
		
		\item[(II)] For $\omega=o(r^{-2})$, as $r\rightarrow\infty$,
		\begin{eqnarray*}
			f=1-\dfrac{m}{r}+\omega(r).
		\end{eqnarray*}
		Moreover, $ \partial_{i}\omega=o(r^{-3})$, as $ r\rightarrow\infty$, and $\omega''(r)\leq \frac{2m}{r^3}$, where $1\leq  i \leq n$ and $m\in [0,\,\infty)$ represents the ADM mass.
	\item[(III)] 
	Consider $\mu\geq\rho\geq0$ and $\mu-\rho=o(r^{-4}),$ as $r\rightarrow\infty.$
\end{itemize}
We assume the existence of an interior boundary $\Sigma$ (non empty), where $f=0$ at $\Sigma$. Consider $\Sigma$ compact and such that $g$ and $f$ extend smoothly to $\Sigma$ (cf. Condition 2 in \cite{leandro2019}). 
\end{definition}

\begin{remark}\label{remark}
\
\begin{enumerate}
    \item The Schwarzschild solution is an example of static space which satisfies Definition \ref{def2}. In fact, we are assuming the same asymptotic conditions (Definition \ref{def2}-(I)-(II)) used by Robinson to prove the static vacuum black hole uniqueness theorem \cite{robinson1977,robinson2009}. Moreover, the asymptotic conditions assumed in $(I)$ and $(II)$ match with the asymptotics assumed in \cite{agostiniani2017,beig1991,beig1992,kunzle1971,massod21987,massod1987,robinson2009}. However, in the above definition we provided a different asymptotic condition for the pressure $\rho$ and density $\mu$. The most common hypothesis for the density and pressure is that they are identically zero outside the fluid region.

      \item The condition $\omega''(r)\leq \frac{2m}{r^3}$ also agrees with the second order derivatives for the function $\omega$ assumed by Robinson \cite{robinson1977,robinson2009}. Furthermore, it becomes trivial if we assume the equation of state $\mu+3\rho=0$ (cf. equation \eqref{beethoven}). However, we will not discuss this equation of state here since in this scenario the dominant energy condition does not hold (i.e., we may have negative pressure), and therefore we can not apply Theorem 1 in \cite{leandro2019} which was very important in the proof of our main result (Theorem \ref{princtheorem}).
\end{enumerate}
\end{remark}

 Here, we will discuss the following problem (cf. \cite{kunzle1971,massod21987,massod1987,massod2007}):
\begin{conjecture}\label{aiai}
	An asymptotically flat solution static perfect fluid space-time satisfying \eqref{principaleq} and \eqref{lapla} must be spherically symmetric.
\end{conjecture}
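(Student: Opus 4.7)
The plan is to follow the combined Robinson--Masood-ul-Alam strategy announced in the introduction: derive a divergence identity from the static perfect fluid equations \eqref{principaleq}--\eqref{lapla}, integrate it over $M$, use the asymptotic data from Definition \ref{def2} to control the flux at infinity and at the inner horizon $\Sigma=\{f=0\}$, and then combine the resulting pointwise rigidity with a Masood-ul-Alam type conformal change so that the positive mass theorem forces conformal flatness, which in turn is known to imply spherical symmetry (cf. \cite{kunzle1971,massod21987,massod1987,massod2007}).

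First I would construct an auxiliary $1$-form $\alpha$ built from $f$, $|\nabla f|$ and the traceless Ricci tensor $\mathring{Ric}$, in the spirit of Robinson's computation for static vacuum black holes. Using the trace-free equation \eqref{eqsemtraco} together with \eqref{lapla} and \eqref{scalarcurv}, the divergence of $\alpha$ should split in the form $\operatorname{div}\alpha = P + E$, where $P$ is a manifestly nonnegative quadratic expression in $\mathring{Ric}$ and the trace-free Hessian $\mathring{\nabla}^{2}f$, and $E$ is an error term produced by the fluid matter $\mu,\rho$. At this stage I would impose a reasonable equation of state (for instance a linear one of the type used in \cite{anderson2019,heinzle2003}) so that $E$ either vanishes identically or acquires a definite sign compatible with $P\ge 0$. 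Identifying this precise pairing of divergence identity and equation of state is, in my view, the main obstacle.

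Next I would integrate $\operatorname{div}\alpha$ over $M$ minus a small neighbourhood of $\Sigma$ and outside a ball of large radius $r$. Definition \ref{def2}-(I)--(II) together with the estimate $\omega''(r)\le 2m/r^{3}$ control the flux of $\alpha$ across the asymptotic sphere, so that it tends to a known multiple of the ADM mass (or to $0$) as $r\to\infty$; the decay $\mu-\rho=o(r^{-4})$ in (III) is exactly what is needed to keep the matter contribution integrable. On the inner boundary I would invoke the result of \cite{leandro2019} recalled after Definition \ref{defA}, which gives $\mu=\rho=0$ on $\Sigma$ and shows that $\Sigma$ is minimal; combined with the fact that $f$ vanishes there, this kills the inner flux term. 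With both signs known, the divergence theorem forces $P\equiv 0$ and $E\equiv 0$ pointwise, yielding $\mathring{Ric}\equiv 0$ and $\mathring{\nabla}^{2}f\equiv 0$ on $M$.

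Finally, to upgrade this rigidity to spherical symmetry I would carry out the Masood-ul-Alam conformal doubling: choose the conformal factor of the form $\phi_{\pm}=(1\pm f)/2$, check using \eqref{principaleq}--\eqref{scalarcurv} and the equation of state that $\phi_{\pm}^{4}g$ has nonnegative scalar curvature, then glue the two copies along $\Sigma$ to obtain a complete asymptotically flat manifold of nonnegative scalar curvature with zero ADM mass. The rigidity case of the positive mass theorem then implies that $(M,\phi^{4}g)$ is isometric to Euclidean space, hence $(M,g)$ is conformally flat; by the classical arguments of \cite{kunzle1971,massod21987} this forces the solution to be spherically symmetric and, in the vacuum exterior, Schwarzschild. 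The delicate points I expect are ensuring smoothness of the doubled metric across $\Sigma$ under the given boundary regularity, and, as already noted, producing a divergence identity whose matter error term is tamed by a physically reasonable equation of state rather than by the unavailable assumption $\mu+3\rho=0$ warned against in Remark \ref{remark}.
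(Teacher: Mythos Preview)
First, note that the paper does not prove Conjecture~\ref{aiai} in full generality; its actual result is Theorem~\ref{princtheorem}, which assumes the equation of state \eqref{SES}. Since you also impose an equation of state, the relevant comparison is with that proof, and there your outline has two genuine gaps.

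\medskip

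\textbf{The pointwise conclusion is wrong.} You claim that the divergence identity, once integrated, forces $\mathring{Ric}\equiv 0$ and $\mathring{\nabla}^{2}f\equiv 0$. For the spatial Schwarzschild slice one has $R=0$ while $Ric\neq 0$, so $\mathring{Ric}=Ric\neq 0$; your conclusion would therefore exclude Schwarzschild, contradicting the target rigidity. What the Robinson-type identity in Lemma~\ref{lema222200111} actually controls is the \emph{Cotton tensor}: the nonnegative part $P$ is $\dfrac{Ff^{3}|C|^{2}}{4|\nabla f|^{2}}$ together with a nonnegative squared-gradient term, and its vanishing yields $C\equiv 0$, i.e.\ conformal flatness in dimension three. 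This is strictly weaker than $\mathring{Ric}=0$ and is precisely what allows one to invoke \cite{bunting1987,robinson1977}. Consequently your ``final'' step (Masood-ul-Alam doubling plus the positive mass theorem) is superfluous once $C=0$ is in hand; the two strategies are alternatives, not stages to be concatenated.

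\medskip

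\textbf{The inner boundary flux does not vanish.} You assert that because $f=0$ on $\Sigma$ and $\Sigma$ is minimal, the flux of $\alpha$ through $\Sigma$ is killed. In the paper this flux is \emph{not} zero and is in fact the engine of the argument: using $2FRic(\nabla f)+G|\nabla f|^{2}\nabla f$ and the Gauss equation, the contribution at $\Sigma$ is
\[
d\,\kappa\!\int_{\Sigma} R^{\Sigma}\,ds\;-\;2(3d-c)\,\kappa^{3}\,\mathrm{Area}(\Sigma),
\qquad \kappa=|\nabla f|\big|_{\Sigma}>0.
\]
One then runs the inequality \emph{twice}, with $(c,d)=(1,0)$ and $(c,d)=(-1,1)$, obtaining complementary bounds that, via Gauss--Bonnet, \eqref{areaasymp}, and the Penrose inequality, force equality everywhere in Lemma~\ref{lema222200111}. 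Without this two-pass boundary analysis the integrated divergence identity yields no contradiction and hence no rigidity.
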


The main result (Theorem \ref{princtheorem}) will prove Conjecture \ref{aiai} considering that an equation of state holds. To do that, we first provide an divergence formula for the static perfect fluid equations and then, by an integration of such formula (cf. Lemma \ref{lema222200111}), we prove that the only possible static perfect solution must be trivial (i.e., Schwarzschild or $\mathbb{R}^{3}$). In fact, the conditions on the equation of state are not really relaxed here but the conditions on the asymptotic behavior (Definition \ref{def2}-(III)). To accomplish our goals, we assume that the isoperimetric (Penrose) inequality holds for $\Sigma$ (cf. \cite{huisken2001} and the discussion after equation \eqref{caseI}) .

Without further ado, we state our main result.

\begin{theorem}\label{princtheorem}
	An asymptotically flat solution for \eqref{principaleq} and \eqref{lapla} in which the energy-density is a smooth function of $f$
	and
	\begin{eqnarray}\label{SES}
(5\rho-\mu)fG+4F\frac{d\mu}{df}\geq0
	\end{eqnarray}
	must be isometric, in the exterior of a compact subset of $M$, either to Schwarzschild space or $\mathbb{R}^3$ with the standard Euclidean metric.
Here,
		\begin{eqnarray}\label{FG}
		F(f)=(cf^{2}+d)(1-f^{2})^{-3}\quad\mbox{and}\quad G(f)=6\left(\dfrac{F}{1-f^{2}}\right)-2c(1-f^{2})^{-3},
	\end{eqnarray}
	in which $c$ and $d$ are constants such that $F>0$.
	\end{theorem}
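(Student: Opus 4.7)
I would model the argument on Robinson's proof of the static vacuum black hole uniqueness theorem, but absorbing the matter contributions of \eqref{principaleq}--\eqref{lapla} into the divergence identity itself. The three stages are: (i) a pointwise divergence identity for a vector field built from $f$, $\nabla f$, $|\nabla f|^{2}$, and the two scalar functions $F, G$ of \eqref{FG}, whose right-hand side is non-negative under \eqref{SES}; (ii) integration of this identity on an exterior domain with careful analysis of the boundary fluxes at $\Sigma$ and at infinity using Definition \ref{def2} together with the Penrose inequality; (iii) a rigidity step turning the vanishing of the bulk integrand into Schwarzschild or Euclidean geometry on the complement of a compact set.

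\textbf{Stage 1 (the divergence identity, Lemma \ref{lema222200111}).} I propose searching for a one-form
\begin{equation*}
\Omega \;=\; F(f)\, d|\nabla f|^{2} \;+\; G(f)\, |\nabla f|^{2}\, df \;+\; H(f)\, df,
\end{equation*}
with an auxiliary $H(f)$ to be determined, and compute $\mathrm{div}(\Omega^{\sharp})$ using Bochner's formula
\begin{equation*}
\tfrac{1}{2}\Delta|\nabla f|^{2} \;=\; |\nabla^{2}f|^{2} + Ric(\nabla f,\nabla f) + \langle\nabla\Delta f,\nabla f\rangle,
\end{equation*}
the orthogonal split $|\nabla^{2}f|^{2}=|\mathring{\nabla}^{2}f|^{2}+\tfrac{1}{3}(\Delta f)^{2}$, and the static perfect fluid equations \eqref{principaleq}, \eqref{lapla}, \eqref{scalarcurv}, \eqref{eqsemtraco}. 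After substitution I expect the identity to collapse into the schematic form
\begin{equation*}
\mathrm{div}(\Omega^{\sharp}) \;=\; \frac{2F(f)}{f^{2}}\,\bigl|f\mathring{Ric}\bigr|^{2} \;+\; \bigl[(5\rho-\mu)fG + 4F\,\mu'(f)\bigr]\,|\nabla f|^{2},
\end{equation*}
which is non-negative by \eqref{SES}. The specific denominators $(1-f^{2})^{-3}$ in \eqref{FG} should be forced by demanding that the coefficients of $|\nabla f|^{4}$ and $f|\nabla f|^{2}\Delta f$ arising in the computation cancel; this also determines $H(f)$ and fixes $F,G$ up to the free constants $c,d$.

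\textbf{Stage 2 (boundary analysis).} Fix $R\gg 1$ and integrate on $\Omega_{R}=(M\setminus K)\cap B_{R}$. Stokes' theorem gives
\begin{equation*}
\int_{\{r=R\}}\Omega(\nu)\,d\sigma \;-\; \int_{\Sigma}\Omega(\nu)\,d\sigma \;=\; \int_{\Omega_{R}}\mathrm{div}(\Omega^{\sharp})\,dv\;\ge\;0.
\end{equation*}
On the outer sphere the expansion $f=1-m/r+\omega$ of Definition \ref{def2}(II) yields $|\nabla f|^{2}\sim m^{2}/r^{4}$, and the bound $\omega''\le 2m/r^{3}$ lets the limit $R\to\infty$ of the flux be computed explicitly in terms of $m,\,c,\,d$. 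On $\Sigma$ we have $f=0$, so $F$ and $G$ take their boundary values $F(0)=d$, $G(0)=6d-2c$; the results of \cite{leandro2019} cited in the introduction provide that $\Sigma$ is minimal and $\mu=\rho=0$ there, which reduces $\Omega(\nu)|_{\Sigma}$ to a geometric quantity bounded by $|\Sigma|$ times a function of $c,d$. Invoking the Penrose inequality $|\Sigma|\le 16\pi m^{2}$ then provides a matching upper bound for the inner flux.

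\textbf{Stage 3 (rigidity).} Choosing $c,d$ so that the outer and inner boundary contributions combine to a non-positive quantity forces $\int_{\Omega_{R}}\mathrm{div}(\Omega^{\sharp})=0$, hence the integrand vanishes. By \eqref{eqsemtraco} this gives $\mathring{Ric}\equiv 0$ on $M\setminus K$, so this region is Einstein; since it is three-dimensional and asymptotically flat, it has vanishing sectional curvature. Then \eqref{scalarcurv} and \eqref{principaleq} force $\mu=\rho=0$ in the exterior and \eqref{lapla} becomes the flat Laplace equation for $f$, whose unique solution compatible with Definition \ref{def2}(II) is the isotropic Schwarzschild lapse when $m>0$, and $f\equiv 1$ on $\mathbb{R}^{3}$ when $m=0$.

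\textbf{Main obstacle.} The delicate step is clearly Stage 1: the algebra must conspire so that the $(1-f^{2})^{-3}$ factors in \eqref{FG} are precisely what is needed to collapse the nonlinear terms produced by Bochner plus \eqref{principaleq}--\eqref{eqsemtraco} into the two-term expression above, with no leftover. A secondary obstacle is calibrating Stage 2 so that the Penrose inequality produces a sharp (not lossy) bound; this is exactly where both the asymptotic condition $\omega''\le 2m/r^{3}$ and the requirement $F>0$ are used.
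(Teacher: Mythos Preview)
Your overall architecture (divergence identity $\Rightarrow$ boundary estimate $\Rightarrow$ rigidity) is the right one, but the specific content you propose for Stages~1 and~3 cannot work, and the failure in Stage~1 is exactly what causes Stage~3 to collapse.

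\textbf{Stage 1.} The identity you aim for,
\[
\mathrm{div}(\Omega^{\sharp}) \;=\; \frac{2F}{f^{2}}\,|f\mathring{Ric}|^{2} \;+\; \bigl[(5\rho-\mu)fG + 4F\mu'(f)\bigr]\,|\nabla f|^{2},
\]
is impossible. In the vacuum case $\mu=\rho=0$ it would read $\mathrm{div}(\Omega^{\sharp})=2F|\mathring{Ric}|^{2}$, and your Stage~2 integration would then force $\mathring{Ric}\equiv 0$; but the Schwarzschild spatial metric has $\mathring{Ric}\neq 0$, so the identity contradicts the existence of the very model you want to recover. What the algebra actually produces (Lemma~\ref{lema222200111}) is a right-hand side containing $\dfrac{Ff^{3}|C|^{2}}{4|\nabla f|^{2}}$ (Cotton tensor, not $|\mathring{Ric}|^{2}$) and a second positive square $\dfrac{3F}{4f|\nabla f|^{2}}\bigl|\nabla|\nabla f|^{2}+8f|\nabla f|^{2}\nabla f/(1-f^{2})\bigr|^{2}$, together with \emph{four} further terms that do not collapse: one proportional to $(3\mu+\rho)\Delta f$, one to $(\mu-\rho)|\nabla f|^{2}$, the equation-of-state term \eqref{SES}, and a cross term $-\dfrac{F\Delta f}{f|\nabla f|^{2}}\langle\nabla|\nabla f|^{2},\nabla f\rangle$. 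The first two are handled by the dominant energy condition and $\Delta f\ge 0$; the cross term is precisely where the asymptotic hypothesis $\omega''\le 2m/r^{3}$ of Definition~\ref{def2}(II) enters (via $\langle\nabla|\nabla f|^{2},\nabla f\rangle = 2(f')^{2}f''$), not merely in the outer-flux computation.

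\textbf{Stage 2.} A single choice of $(c,d)$ will not suffice. The paper runs the integrated inequality with two different admissible pairs, $(c,d)=(1,0)$ and $(c,d)=(-1,1)$, obtaining $\kappa^{2}\mathrm{Area}(\Sigma)\ge\pi$ (using the Penrose inequality to pin down $\kappa=1/4m$) and, via Gauss--Bonnet, $\kappa^{2}\mathrm{Area}(\Sigma)\le\pi$. Only the \emph{conjunction} forces equality and hence vanishing of the entire integrand.

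\textbf{Stage 3.} Because the non-negative integrand involves $|C|^{2}$ rather than $|\mathring{Ric}|^{2}$, the rigidity step gives $C\equiv 0$, i.e.\ conformal flatness, not Einstein. The remaining vanishing terms then force $\mu=\rho=0$ (or $f$ constant), after which the Schwarzschild conclusion comes from the static \emph{vacuum} uniqueness theorem \cite{bunting1987,robinson1977}, not from a direct ``Einstein $+$ asymptotically flat $\Rightarrow$ flat'' argument, which would lose Schwarzschild entirely.
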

	
	It is worth to saying that functions $F$ and $G$ in the above theorem came from the original divergence formula of Robinson \cite[page 697]{robinson1977} used to proof the three dimensional static black hole uniqueness theorem. Here, we used these functions to get a distinct equation of state in the attempt to solve Conjecture \ref{aiai}.

As far was we know, the most general proof for Conjecture \ref{aiai} was presented by Masood-ul-Alam in \cite{massod2007}. In his proof, was assumed that the density $\mu(\rho)$ is a non-decreasing function of the pressure $\rho$, where $\mu$ and $\rho$ are functions of $f$ (see also \cite{massod1987,massod1988} and their assumptions over the equations of state). Is worth to saying that his technique avoids a Robinson-type identity and uses the positive mass theorem as an approach. On the other hand, the results of Beig and Simon \cite{beig1991,beig1992} used equations of state similar to \eqref{SES}. Assuming their equation of state holds, they were able to provide a rigidity result. In fact, the equation of state Beig and Simon studied is related to the divergence formula of Robinson. However, their intention was to use the maximum principle for the Laplacian of a conformal metric, which came from a divergence formula for the conformal metric. Then, they concluded that this conformal metric is, in fact, conformally flat. Those proofs follow the same trend of using the conformal metric to get a rigidity using the positive mass theorem. Our approach is more similar to the strategy used by Robinson to get the static vacuum black hole uniqueness theorem \cite{robinson1977,robinson2009}.

Now let us analyze the hypothesis assumed in our main theorem. First, bear in mind that equations of state like \eqref{SES} were considered before (cf. \cite{beig1991,beig1992,massod1988}). 
The decay (Definition \ref{def2}) assumed for $\mu-\rho$ came naturally since it is expected that $\mu=\rho=0$ outside the fluid region (cf. \cite{massod21987,massod1987,massod2007}). It seems more natural to assume a decay for the density and pressure instead of its immediate vanishing outside the fluid region. Thus, the assumption in Definition \ref{def2}-(III) is weaker than the one assumed in the earliest works about this topic.

In addition, the asymptotic condition for the second derivative of the function $\omega$ is reasonable, see Definition \ref{def2}-(II), if we compare this decay with the decay assumed by Robinson and Agostiniani-Mazzieri \cite{agostiniani2017,robinson1977,robinson2009}.
Hence, part of our hypothesis concerns a slight change of the asymptotic conditions considered by \cite{agostiniani2017,beig1991,beig1992,kunzle1971,massod21987} and others. It is also important to remember that the dominant energy condition $\mu\geq\rho$ holds for all known forms of matter.

\section{Background}

In this section we shall present some preliminaries which will be useful for the establishment of the desired results, we will obtain a useful expression of the divergence for the static perfect fluid equations similar to the divergence formula of Robinson \cite{robinson1977,robinson2009} that will be integrated later. Then, we will get an inequality which leads us to a contradiction with our divergence formula (Lemma \ref{lema222200111}). This will drive the solutions for the static perfect fluid space to be trivial (i.e., Schwarzschild or $\mathbb{R}^{3})$.

We start by recalling that for a Riemannian manifold $(M^{3},\,g)$ the curvature tensor is defined by the following decomposition formula
\begin{eqnarray*}
\label{weyl}
R_{ijkl}=\big(R_{ik}g_{jl}+R_{jl}g_{ik}-R_{il}g_{jk}-R_{jk}g_{il}\big)-\frac{R}{2}\big(g_{jl}g_{ik}-g_{il}g_{jk}\big),
\end{eqnarray*}
where $R_{ijkl}$ stands for the Riemannian curvature operator. Moreover, the Cotton tensor $C$ is given according  to

\begin{equation*}
\label{cotton}
\displaystyle{C_{ijk}=\nabla_{i}R_{jk}-\nabla_{j}R_{ik}-\frac{1}{4}\big(\nabla_{i}R g_{jk}-\nabla_{j}R g_{ik}).}
\end{equation*}

Another useful formula is the Ricci equation:
\begin{eqnarray}\label{riccieq}
\nabla_{i}\nabla_{j}\nabla_{k}f-\nabla_{j}\nabla_{i}\nabla_{k}f=R_{ijkl}\nabla^{l}f.
\end{eqnarray}
In what follows, we set the covariant 3-tensor $T_{ijk}$ by

\begin{eqnarray*}\label{TensorT}
T_{ijk}&=&2(\mathring{R}_{ik}\nabla_jf-\mathring{R}_{jk}\nabla_if)+(\mathring{R}_{jl}\nabla^{l}fg_{ik}-\mathring{R}_{il}\nabla^{l}fg_{jk}).
\end{eqnarray*}  
Notice, also, that it is skew-symmetric in the first two indices and trace-free in any two indices.

 The first lemma we present here was recently proved in \cite{coutinho2020}, and it connects the Cotton tensor with the perfect fluid equations in a simple expression which will be useful providing our divergence formula.

\begin{lemma}\label{Lem33}
Let $(M^3,g)$ be a Riemannian manifold and $f$ a smooth function on $M^3$ satisfying $f\mathring{R}ic=\mathring{\nabla^2}f.$ Then, it holds
\begin{eqnarray*}
fC_{ijk}=T_{ijk}.
\end{eqnarray*}
\end{lemma}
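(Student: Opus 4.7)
The plan is to start from the hypothesis $f\mathring{R}ic=\mathring{\nabla^2}f$, rewritten with the trace in explicit form as
$$\nabla_i\nabla_k f = fR_{ik} - \psi\, g_{ik}, \qquad \psi:=\tfrac{1}{3}(fR-\Delta f),$$
and to differentiate, antisymmetrize, and reorganize the resulting scalar--curvature debris so that what is left assembles into $T_{ijk}$. Taking $\nabla_j$ of the Hessian identity and antisymmetrizing in $(i,j)$, the Ricci equation \eqref{riccieq} converts the commutator of third derivatives into a Riemann contraction, yielding
$$f(\nabla_i R_{jk}-\nabla_j R_{ik}) = R_{ijkl}\nabla^l f - (R_{jk}\nabla_i f - R_{ik}\nabla_j f) + (\nabla_i\psi)\,g_{jk} - (\nabla_j\psi)\,g_{ik}.$$

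Next I would feed the three-dimensional decomposition of $R_{ijkl}$ into $R_{ijkl}\nabla^l f$ and split every Ricci tensor into its trace-free part plus $\tfrac{R}{3}g$. After a brief rearrangement, the pieces containing $\mathring{R}_{ij}$ assemble into the expression $2(\mathring{R}_{ik}\nabla_j f-\mathring{R}_{jk}\nabla_i f)+(\mathring{R}_{jl}\nabla^{l}fg_{ik}-\mathring{R}_{il}\nabla^{l}fg_{jk})=T_{ijk}$, while the scalar-curvature contributions from $R_{ijkl}\nabla^l f$ and from $-(R_{jk}\nabla_i f - R_{ik}\nabla_j f)$ collapse into a single term $\tfrac{R}{2}(g_{ik}\nabla_j f - g_{jk}\nabla_i f)$. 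Thus after this step one has
$$f(\nabla_i R_{jk}-\nabla_j R_{ik}) = T_{ijk} + \tfrac{R}{2}(g_{ik}\nabla_j f-g_{jk}\nabla_i f) + (\nabla_i\psi)\,g_{jk}-(\nabla_j\psi)\,g_{ik}.$$

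The only remaining task is to show that the trace terms on the right reproduce exactly the subtraction $-\tfrac{f}{4}(\nabla_i R\,g_{jk}-\nabla_j R\,g_{ik})$ built into $C_{ijk}$. The key identity is
$$\nabla_i\psi = \tfrac{R}{2}\nabla_i f + \tfrac{f}{4}\nabla_i R,$$
which I would derive by contracting $\nabla^{k}$ against the Hessian identity, using the twice-contracted Bianchi identity $\nabla^{k}R_{jk}=\tfrac{1}{2}\nabla_j R$, and comparing with the Bochner-type commutator $\nabla^{k}\nabla_j\nabla_k f=\nabla_j\Delta f+R_{jl}\nabla^{l}f$; combining with the obvious $3\nabla_j\psi=R\nabla_j f+f\nabla_j R-\nabla_j\Delta f$ eliminates $\nabla_j\Delta f$ and yields the stated formula. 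Substituting this expression for $\nabla_i\psi$ into the displayed equation, the $\tfrac{R}{2}$ terms cancel pairwise and only the $\tfrac{f}{4}\nabla R$ terms survive, giving $fC_{ijk}=T_{ijk}$. The main obstacle is not conceptual but purely organizational: keeping track of the three separate scalar-curvature contributions (from the Riemann expansion, from the $R_{jk}\nabla f$ term, and from $\nabla\psi$) so that they collapse cleanly into precisely the Cotton subtraction.
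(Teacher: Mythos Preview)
Your argument is correct: the rewriting $\nabla_i\nabla_k f=fR_{ik}-\psi g_{ik}$ with $\psi=\tfrac13(fR-\Delta f)$, the antisymmetrized third derivative via \eqref{riccieq}, the three-dimensional Riemann decomposition, and the identity $\nabla_j\psi=\tfrac{R}{2}\nabla_j f+\tfrac{f}{4}\nabla_j R$ (obtained exactly as you describe, and which is precisely Equation~(4.7) of \cite{coutinho2020} quoted later in the paper) combine to give $fC_{ijk}=T_{ijk}$; I checked all the coefficients and they close up as you claim.

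As for comparison: the paper does not actually prove Lemma~\ref{Lem33} here but cites \cite{coutinho2020} for it, so your write-up is in fact more self-contained than the present text. The method you use---differentiate the structural equation, commute derivatives, expand Riemann in dimension three, and control the trace terms via a contracted Bianchi/Bochner identity---is the standard one and is essentially what is done in \cite{coutinho2020}; nothing genuinely different is happening, only that you have reproduced the computation explicitly rather than invoking the reference.
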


\
The next lemma gives us a formula for the norm of the Cotton tensor involving only the functions of Definition \ref{defA}.
\begin{lemma}\label{lem21}
	Let $\big(M^3,\,g,\,f)$ be the spatial factor of a static perfect fluid space-time. Then:
	\begin{eqnarray*}
	f^{4}|C|^{2}&=&4|\nabla f|^{2}[\Delta|\nabla f|^{2}-\frac{1}{f}\langle\nabla|\nabla f|^{2},\,\nabla f\rangle-8\pi f\langle\nabla\mu,\,\nabla f\rangle+8\pi(\mu+\rho)|\nabla f|^{2}-(\Delta f)^{2}]\nonumber\\
	&-&3|\nabla|\nabla f|^{2}|^{2}+4\Delta f\langle\nabla|\nabla f|^{2},\,\nabla f\rangle.
	\end{eqnarray*}
\end{lemma}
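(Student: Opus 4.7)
The strategy is to square the identity of Lemma \ref{Lem33}, reducing the problem to computing $|T|^{2}$, and then to convert every Ricci occurrence into derivatives of $f$ by means of the static perfect fluid equations and Bochner's formula. Since $fC_{ijk}=T_{ijk}$ we have $f^{4}|C|^{2}=f^{2}|T|^{2}$, so the entire argument reduces to an algebraic expansion of $|T|^{2}$ followed by suitable substitutions.

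The first step is purely tensorial. Setting $A_{ij}=\mathring{R}_{ij}$, $w_{i}=\nabla_{i}f$, and $P_{i}=A_{il}w^{l}$, one has
\[
T_{ijk}=2(A_{ik}w_{j}-A_{jk}w_{i})+(P_{j}g_{ik}-P_{i}g_{jk}).
\]
Expanding $T_{ijk}T^{ijk}$ and using $\mathrm{tr}(A)=0$ together with $g^{ik}g_{ik}=3$, the cross terms involving $\mathrm{tr}(A)$ vanish and a short count yields
\[
|T|^{2}=8\,|\mathring{R}ic|^{2}|\nabla f|^{2}-12\,|\mathring{R}ic(\nabla f)|^{2},
\]
where $(\mathring{R}ic(\nabla f))_{i}:=\mathring{R}_{il}\nabla^{l}f$. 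Now \eqref{eqsemtraco} reads $f\mathring{R}_{ij}=\nabla_{i}\nabla_{j}f-\tfrac{\Delta f}{3}g_{ij}$, so multiplying by $f^{2}$ yields the two identities
\[
f^{2}|\mathring{R}ic|^{2}=|\nabla^{2}f|^{2}-\tfrac{(\Delta f)^{2}}{3},\qquad f^{2}|\mathring{R}ic(\nabla f)|^{2}=\tfrac{1}{4}|\nabla|\nabla f|^{2}|^{2}-\tfrac{\Delta f}{3}\langle\nabla|\nabla f|^{2},\nabla f\rangle+\tfrac{(\Delta f)^{2}}{9}|\nabla f|^{2}.
\]
Collecting everything in $f^{2}|T|^{2}$, the $(\Delta f)^{2}$ contributions combine to $-4|\nabla f|^{2}(\Delta f)^{2}$ and I arrive at the intermediate formula
\[
f^{4}|C|^{2}=8|\nabla f|^{2}|\nabla^{2}f|^{2}-4|\nabla f|^{2}(\Delta f)^{2}-3|\nabla|\nabla f|^{2}|^{2}+4\Delta f\,\langle\nabla|\nabla f|^{2},\nabla f\rangle.
\]

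The remaining $8|\nabla f|^{2}|\nabla^{2}f|^{2}$ is handled by Bochner, $\Delta|\nabla f|^{2}=2|\nabla^{2}f|^{2}+2\langle\nabla\Delta f,\nabla f\rangle+2\,Ric(\nabla f,\nabla f)$, followed by the perfect fluid equations. Equation \eqref{principaleq} gives at once $Ric(\nabla f,\nabla f)=\tfrac{1}{2f}\langle\nabla|\nabla f|^{2},\nabla f\rangle+4\pi(\mu-\rho)|\nabla f|^{2}$, accounting for the $-\tfrac{1}{f}\langle\nabla|\nabla f|^{2},\nabla f\rangle$ term of the statement. The main obstacle is the term $\langle\nabla\Delta f,\nabla f\rangle$, because differentiating \eqref{lapla} introduces an unwanted $\langle\nabla\rho,\nabla f\rangle$ contribution which is absent from the target formula. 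I dispose of it by the conservation identity $f\nabla\rho+(\mu+\rho)\nabla f=0$, which is derived by taking the divergence of \eqref{principaleq}, invoking the contracted second Bianchi identity $\nabla^{i}R_{ij}=\tfrac{1}{2}\nabla_{j}R$, and using \eqref{scalarcurv}--\eqref{lapla} to eliminate $R$ and $\Delta f$ from the resulting expression. Substituting gives $\langle\nabla\Delta f,\nabla f\rangle=4\pi f\langle\nabla\mu,\nabla f\rangle-8\pi\mu|\nabla f|^{2}$, and combining the $\mu$-coefficients $64\pi\mu-32\pi(\mu-\rho)=32\pi(\mu+\rho)$ inside $8|\nabla f|^{2}|\nabla^{2}f|^{2}$ reproduces exactly the bracketed expression in the statement.
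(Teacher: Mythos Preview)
Your proof is correct and follows essentially the same route as the paper: both expand $f^{2}|T|^{2}$ via \eqref{eqsemtraco} to reach the intermediate identity $f^{4}|C|^{2}=8|\nabla^{2}f|^{2}|\nabla f|^{2}-4|\nabla f|^{2}(\Delta f)^{2}-3|\nabla|\nabla f|^{2}|^{2}+4\Delta f\langle\nabla|\nabla f|^{2},\nabla f\rangle$, and then replace $|\nabla^{2}f|^{2}$ by a Bochner-type expression. The only cosmetic difference is that you obtain $\langle\nabla\Delta f,\nabla f\rangle=4\pi f\langle\nabla\mu,\nabla f\rangle-8\pi\mu|\nabla f|^{2}$ by differentiating \eqref{lapla} and invoking the conservation law $f\nabla\rho=-(\mu+\rho)\nabla f$, whereas the paper quotes the equivalent identity $\nabla_{j}\Delta f=\tfrac{f}{4}\nabla_{j}R-\tfrac{R}{2}\nabla_{j}f$ from \cite{coutinho2020}.
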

\begin{proof}
From Lemma \ref{Lem33} we have
		\begin{eqnarray*}
		f^{2}C_{ijk}&=&2(f\mathring{R}_{ik}\nabla_jf-f\mathring{R}_{jk}\nabla_if)+(f\mathring{R}_{jl}\nabla^{l}fg_{ik}-f\mathring{R}_{il}\nabla^{l}fg_{jk}).
	\end{eqnarray*}
Using \eqref{eqsemtraco} the above identity, it can be written in the following way
		\begin{eqnarray*}
	f^{2}C_{ijk}&=&2(\mathring{\nabla}^{2}_{ik}f\nabla_{j}f-\mathring{\nabla}^{2}_{jk}f\nabla_{i}f)+(\mathring{\nabla}^{2}_{jl}f\nabla^{l}fg_{ik}-\mathring{\nabla}^{2}_{il}f\nabla^{l}fg_{jk})\nonumber\\
\end{eqnarray*}

Now, to prove the next identity we only need to use the above equation. Then,
	\begin{eqnarray*}
	f^{4}|C|^{2}&=&f^{2}|T|^{2}\nonumber\\
	&=&8|\mathring{\nabla}^{2}f|^{2}|\nabla f|^{2}-12\mathring{\nabla}^{2}_{ik}f\nabla^{i}f(\mathring{\nabla}^{2}_{j})^{k}f\nabla^{j}f,
	\end{eqnarray*}
where $(\mathring{\nabla}^{2}_{j})^{k}f=\nabla_{j}\nabla^{k}f-\frac{\Delta f}{n}g_{j}^{k}.$
	
Since $\mathring{\nabla}^{2}f=\nabla^{2}f-\dfrac{\Delta f}{3}g$, $|\mathring{\nabla}^{2}f|^{2}=|\nabla^{2}f|^{2}-\dfrac{(\Delta f)^{2}}{3}$ and $\nabla^{2}f(\nabla f)=\frac{1}{2}\nabla|\nabla f|^{2}$ we get
	\begin{eqnarray}\label{normC}
	f^{4}|C|^{2}=8|{\nabla}^{2}f|^{2}|\nabla f|^{2}-3|\nabla|\nabla f|^{2}|^{2}-4|\nabla f|^{2}(\Delta f)^{2}+4\Delta f\langle\nabla|\nabla f|^{2},\,\nabla f\rangle.
	\end{eqnarray}

On the other hand, contracting  \eqref{riccieq} over $i$ and $k$ we get 
\begin{eqnarray*}
\nabla^{i}\nabla_{j}\nabla_{i}f-\nabla_{j}\Delta f=R_{jl}\nabla^{l}f.
\end{eqnarray*}
Moreover, from \eqref{principaleq} and \eqref{lapla} we have
\begin{eqnarray*}
\nabla^{i}\nabla_{j}\nabla_{i}f-\nabla_{j}\Delta f=\dfrac{1}{2f}\nabla_{j}|\nabla f|^{2}+4\pi(\mu-\rho)\nabla_{j}f.
\end{eqnarray*}
Then, using Equation (4.7) in \cite{coutinho2020},
\begin{eqnarray*}
\nabla_{j}\Delta f= \frac{f}{4}\nabla_{j}R-\frac{R}{2}\nabla_{j}f,
\end{eqnarray*}
we can infer that
\begin{eqnarray*}
\nabla^{i}\nabla_{j}\nabla_{i}f-\frac{f}{4}\nabla_{j}R+\frac{R}{2}\nabla_{j}f=\dfrac{1}{2f}\nabla_{j}|\nabla f|^{2}+4\pi(\mu-\rho)\nabla_{j}f.
\end{eqnarray*}
Thus,
\begin{eqnarray*}
\nabla^{j}f\nabla^{i}\nabla_{j}\nabla_{i}f=4\pi{f}\langle\nabla\mu,\,\nabla f\rangle+\dfrac{1}{2f}\langle\nabla|\nabla f|^{2},\,\nabla f\rangle-4\pi(\mu+\rho)|\nabla f|^{2}.
\end{eqnarray*}

So, using this in the next equation
\begin{eqnarray*}
\nabla^{i}(\nabla^{j}f\nabla_{j}\nabla_{i}f)=\nabla^{i}\nabla^{j}f\nabla_{i}\nabla_{j}f+\nabla^{j}f\nabla^{i}\nabla_{j}\nabla_{i}f,
\end{eqnarray*}
we will get
\begin{eqnarray*}\label{dt1}
2|\nabla^{2}f|^{2}=\Delta|\nabla f|^{2}-\frac{1}{f}\langle\nabla|\nabla f|^{2},\,\nabla f\rangle-8\pi f\langle\nabla\mu,\,\nabla f\rangle+8\pi(\mu+\rho)|\nabla f|^{2}.
\end{eqnarray*}
Combining the above equation with \eqref{normC} the result follows.

\end{proof}

\

From now on, we start developing the divergence formulas for the static perfect fluid. For this formula, which is the key ingredient to prove our main results, we need to define the functions $F$ and $G$ given by \eqref{FG}. This two functions appear naturally in the original proof of the divergence formula of Robinson \cite{robinson1977}. Here, we assume the existence of the same functions in the attempt to provide a good divergence formula for the perfect fluid equations.
\begin{lemma}\label{lema22}
	Let $\big(M^3,\,g,\,f)$ be the spatial factor of a static perfect fluid space-time such that $\mu:=\mu(\rho)$. Then,
	\begin{eqnarray*}
&&div\left[F(f^{-1}\nabla|\nabla f|^{2}+8\pi(\mu-\rho)\nabla f)+G|\nabla f|^{2}\nabla f\right]-\dfrac{Ff^{3}|C|^{2}}{4|\nabla f|^{2}}\nonumber\\
&=&16\pi F\langle\nabla\mu,\,\nabla f\rangle +\left[8\pi(\mu-\rho)|\nabla f|^{-2}F'+G'\right]|\nabla f|^{4}+\left[\dfrac{F\Delta f}{f}+8\pi(\mu-\rho)F+G|\nabla f|^{2}\right]\Delta f\nonumber\\
&+&\left(\frac{F'}{f}+G-\frac{F\Delta f}{f|\nabla f|^{2}}\right)\langle\nabla|\nabla f|^{2},\,\nabla f\rangle+\dfrac{3F|\nabla|\nabla f|^{2}|^{2}}{4f|\nabla f|^{2}},
	\end{eqnarray*}
	where $F(f)$ and $G(f)$ are smooth functions of $f$.
\end{lemma}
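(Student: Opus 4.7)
My plan is to prove the identity by direct expansion, using Lemma~\ref{lem21} to absorb the $|C|^{2}$ term and a Bianchi-type identity derived from \eqref{principaleq}--\eqref{lapla} to handle the leftover $\nabla\rho$ contributions. Since $F$ and $G$ depend only on $f$, one has $\nabla F=F'(f)\nabla f$ and $\nabla G=G'(f)\nabla f$, so applying the Leibniz rule $\mathrm{div}(\phi X)=\langle\nabla\phi,X\rangle+\phi\,\mathrm{div}(X)$ to each of the three pieces inside the outer divergence expands the left-hand side into a sum of terms in $\Delta|\nabla f|^{2}$, $\Delta f$, $\langle\nabla|\nabla f|^{2},\nabla f\rangle$, $|\nabla f|^{2}$, $|\nabla f|^{4}$, $\langle\nabla\mu,\nabla f\rangle$, and $\langle\nabla\rho,\nabla f\rangle$.

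Next, I would rewrite $\frac{Ff^{3}|C|^{2}}{4|\nabla f|^{2}}=\frac{F}{4f|\nabla f|^{2}}\bigl(f^{4}|C|^{2}\bigr)$ and substitute the expression for $f^{4}|C|^{2}$ from Lemma~\ref{lem21}. The contribution $\frac{F}{f}\Delta|\nabla f|^{2}$ coming from $\mathrm{div}(Ff^{-1}\nabla|\nabla f|^{2})$ is cancelled by the corresponding term in the $|C|^{2}$ identity, the two $F/f^{2}$ contributions along $\langle\nabla|\nabla f|^{2},\nabla f\rangle$ cancel, and grouping what remains by its natural factors reproduces every piece of the asserted right-hand side. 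The bookkeeping is long but mechanical, and after it is complete only two residual terms survive:
\begin{equation*}
-8\pi F\,\langle\nabla\rho,\nabla f\rangle\ -\ \frac{8\pi F(\mu+\rho)|\nabla f|^{2}}{f}.
\end{equation*}

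These residuals are the real content of the lemma, and killing them is the one non-routine step. I would obtain the hydrostatic relation
\begin{equation*}
f\,\nabla\rho + (\mu+\rho)\,\nabla f\ =\ 0
\end{equation*}
by taking the covariant divergence of \eqref{principaleq}: the twice-contracted Bianchi identity together with \eqref{scalarcurv} gives $f\nabla^{i}R_{ij}=8\pi f\nabla_{j}\mu$, the Bochner-type commutation $\nabla^{i}\nabla_{i}\nabla_{j}f=\nabla_{j}\Delta f+R_{jl}\nabla^{l}f$ removes the $R_{ij}\nabla^{i}f$ term, and \eqref{lapla} expands $\nabla_{j}\Delta f$; the remaining equation is algebraic and yields the displayed identity. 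Pairing it with $\nabla f$ produces $\langle\nabla\rho,\nabla f\rangle=-(\mu+\rho)|\nabla f|^{2}/f$, so the two residual terms cancel and the lemma follows. I expect the hydrostatic identity to be the main conceptual obstacle, while the rest of the argument is a careful but routine tensor expansion.
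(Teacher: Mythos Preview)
Your proposal is correct and follows essentially the same approach as the paper: a direct Leibniz expansion of the divergence, substitution of the $f^{4}|C|^{2}$ formula from Lemma~\ref{lem21}, and the hydrostatic (contracted Bianchi) identity $\nabla\rho=-f^{-1}(\mu+\rho)\nabla f$ to eliminate the $\langle\nabla\rho,\nabla f\rangle$ contribution. The only cosmetic difference is ordering---the paper quotes the hydrostatic relation at the outset and uses it during the expansion, whereas you apply it at the end to cancel the two residual terms; your derivation of that relation from \eqref{principaleq}--\eqref{lapla} is a nice self-contained addition.
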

\begin{proof}
The Bianchi identity for $g$ is reduced to (cf. \cite{beig1992,massod1987,massod2007})
\begin{eqnarray}\label{masood}
\nabla\rho=-f^{-1}(\mu+\rho)\nabla f.
\end{eqnarray}
A straightforward computation gives us
	\begin{eqnarray*}
&&div\left[F(f^{-1}\nabla|\nabla f|^{2}+8\pi(\mu-\rho)\nabla f)+G|\nabla f|^{2}\nabla f\right]=f^{-1}\langle\nabla F,\,\nabla|\nabla f|^{2}\rangle+8\pi(\mu-\rho)\langle\nabla F,\,\nabla f\rangle\nonumber\\
&-&Ff^{-2}\langle\nabla f,\,\nabla|\nabla f|^{2}\rangle+Ff^{-1}\Delta|\nabla f|^{2}+8\pi F\langle\nabla\mu,\,\nabla f\rangle-8\pi F\langle\nabla\rho,\,\nabla f\rangle+8\pi(\mu-\rho)F\Delta f\nonumber\\
&+&|\nabla f|^{2}\langle\nabla G,\,\nabla f\rangle+G\langle\nabla f,\,\nabla|\nabla f|^{2}\rangle+G|\nabla f|^{2}\Delta f\nonumber\\
&=&f^{-1}F'\langle\nabla f,\,\nabla|\nabla f|^{2}\rangle+8\pi(\mu-\rho)F'|\nabla f|^{2}\nonumber\\
&-&Ff^{-2}\langle\nabla f,\,\nabla|\nabla f|^{2}\rangle+Ff^{-1}\Delta|\nabla f|^{2}+8\pi F\langle\nabla\mu,\,\nabla f\rangle-8\pi F\langle\nabla\rho,\,\nabla f\rangle+8\pi(\mu-\rho)F\Delta f\nonumber\\
&+&G'|\nabla f|^{4}+G\langle\nabla f,\,\nabla|\nabla f|^{2}\rangle+G|\nabla f|^{2}\Delta f\nonumber\\
&=&Ff^{-1}\Delta|\nabla f|^{2}+(f^{-1}F'-Ff^{-2}+G)\langle\nabla f,\,\nabla|\nabla f|^{2}\rangle+8\pi(\mu-\rho)F'|\nabla f|^{2}\nonumber\\
&+&8\pi F\langle\nabla\mu,\,\nabla f\rangle-8\pi F\langle\nabla\rho,\,\nabla f\rangle\nonumber\\
&+&G'|\nabla f|^{4}+[G|\nabla f|^{2}+8\pi(\mu-\rho)F]\Delta f\nonumber\\
&=&Ff^{-1}\Delta|\nabla f|^{2}+(f^{-1}F'-Ff^{-2}+G)\langle\nabla f,\,\nabla|\nabla f|^{2}\rangle\nonumber\\
&+&8\pi F\langle\nabla\mu,\,\nabla f\rangle+8\pi[ Ff^{-1}(\mu+\rho)+(\mu-\rho)F']|\nabla f|^{2}\nonumber\\
&+&[G|\nabla f|^{2}+8\pi(\mu-\rho)F]\Delta f+G'|\nabla f|^{4}.\nonumber\\
	\end{eqnarray*}
Combining the above formula with Lemma \ref{lem21} we get the result.
\end{proof}

Finally, we present the divergence equation for the static perfect fluid. This equality was mainly inspired by Robinson \cite{robinson1977}.
\begin{lemma}\label{lema222200111}
	Let $\big(M^3,\,g,\,f)$ be the spatial factor of a static perfect fluid space-time such that $\mu:=\mu(\rho)$. Then, 
	\begin{eqnarray*}\label{hiurieq}
&&div\left[F(f^{-1}\nabla|\nabla f|^{2}+8\pi(\mu-\rho)\nabla f)+G|\nabla f|^{2}\nabla f\right]\nonumber\\
&=&\dfrac{Ff^{3}|C|^{2}}{4|\nabla f|^{2}}-\frac{F\Delta f}{f|\nabla f|^{2}}\langle\nabla|\nabla f|^{2},\,\nabla f\rangle
+\dfrac{3F}{4f|\nabla f|^{2}}\left|\nabla|\nabla f|^{2}+8\dfrac{f|\nabla f|^{2}\nabla f}{(1-f^{2})}\right|^{2}\nonumber\\
&+&4\pi\left(3\mu+\rho\right)F\Delta f+\dfrac{96\pi{fF}}{1-f^{2}}|\nabla f|^{2}(\mu-\rho)+4\pi|\nabla f|^{2}\left[(5\rho-\mu)fG+4F\dfrac{d\mu}{df}\right].
	\end{eqnarray*}
\end{lemma}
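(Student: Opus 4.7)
The plan is to start from the divergence identity of Lemma \ref{lema22} and specialise it to the explicit functions $F$ and $G$ from \eqref{FG}. A direct differentiation of $F=(cf^{2}+d)(1-f^{2})^{-3}$ gives
$$F' \;=\; \frac{2cf}{(1-f^{2})^{3}}+\frac{6fF}{1-f^{2}},$$
from which, together with $G=6F/(1-f^{2})-2c(1-f^{2})^{-3}$, one reads off the two algebraic identities
$$\frac{F'}{f}+G \;=\; \frac{12F}{1-f^{2}},\qquad G' \;=\; \frac{48fF}{(1-f^{2})^{2}}.$$
These identities are the sole reason Robinson's particular $F$ and $G$ appear in the argument: they are exactly what is needed to make a perfect square materialise on the right-hand side of Lemma \ref{lema22}.

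Next I would organise the right-hand side of Lemma \ref{lema22} into a ``geometric'' group and a ``matter'' group. In the geometric group I collect the three terms
$$\frac{3F|\nabla|\nabla f|^{2}|^{2}}{4f|\nabla f|^{2}} \;+\; \Bigl(\frac{F'}{f}+G\Bigr)\langle\nabla|\nabla f|^{2},\nabla f\rangle \;+\; G'|\nabla f|^{4},$$
isolating the $-\frac{F\Delta f}{f|\nabla f|^{2}}\langle\nabla|\nabla f|^{2},\nabla f\rangle$ piece, which already appears unchanged in the target formula. Applying the two identities, the coefficient of $\langle\nabla|\nabla f|^{2},\nabla f\rangle$ becomes $12F/(1-f^{2})$ and $G'|\nabla f|^{4}$ becomes $48fF|\nabla f|^{4}/(1-f^{2})^{2}$. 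A one-line check that $\frac{3F}{4f|\nabla f|^{2}}\cdot\frac{16f|\nabla f|^{2}}{1-f^{2}}=\frac{12F}{1-f^{2}}$ and $\frac{3F}{4f|\nabla f|^{2}}\cdot\frac{64f^{2}|\nabla f|^{6}}{(1-f^{2})^{2}}=\frac{48fF|\nabla f|^{4}}{(1-f^{2})^{2}}$ then shows that these three terms assemble into the completed square
$$\frac{3F}{4f|\nabla f|^{2}}\Bigl|\nabla|\nabla f|^{2}+\frac{8f|\nabla f|^{2}\nabla f}{1-f^{2}}\Bigr|^{2}$$
appearing in the target statement.

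For the matter group I would substitute $\Delta f=4\pi(\mu+3\rho)f$ from \eqref{lapla} into $\bigl[F\Delta f/f+8\pi(\mu-\rho)F\bigr]\Delta f$, which collapses to $4\pi(3\mu+\rho)F\Delta f$ via the combination $(\mu+3\rho)+2(\mu-\rho)=3\mu+\rho$ and yields the first physical term of the target. For the remaining terms $8\pi(\mu-\rho)F'|\nabla f|^{2}$, $G|\nabla f|^{2}\Delta f$ and $16\pi F\langle\nabla\mu,\nabla f\rangle$, I would rewrite $\langle\nabla\mu,\nabla f\rangle = (d\mu/df)|\nabla f|^{2}$ --- legitimate because $\mu=\mu(\rho)$ and the Bianchi-type identity \eqref{masood} makes $\rho$ a function of $f$ --- and then invoke the elementary identity $fG=12fF/(1-f^{2})-F'$ extracted above to regroup. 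A short collection of like terms then packages the leftover pieces exactly as $\tfrac{96\pi fF}{1-f^{2}}|\nabla f|^{2}(\mu-\rho)+4\pi|\nabla f|^{2}\bigl[(5\rho-\mu)fG+4F\,d\mu/df\bigr]$.

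The whole argument is essentially linear algebra in $F$, $F'$, $G$, $G'$ combined with \eqref{lapla} and \eqref{masood}, so I do not expect a genuine geometric obstacle. The one place where care is required is the final regrouping, whose correctness rests on the sign relation $F'-12fF/(1-f^{2})=-fG$; carrying this sign cleanly is what makes the coefficients of $(\mu-\rho)fG$ and of $(\mu-\rho)F'$ align, and it is what closes the identity.
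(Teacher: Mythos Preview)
Your proposal is correct and follows essentially the same route as the paper's proof: both start from Lemma \ref{lema22}, invoke the two algebraic identities $F'/f+G=12F/(1-f^{2})$ and $G'=48fF/(1-f^{2})^{2}$ satisfied by Robinson's functions \eqref{FG} to complete the square, then substitute \eqref{lapla} and use \eqref{masood} together with $\mu=\mu(f)$ to regroup the matter terms. The only difference is cosmetic ordering---the paper expands the square first and then subtracts, while you build it up---and your explicit use of $F'=12fF/(1-f^{2})-fG$ in the final regrouping is exactly the relation the paper uses implicitly when passing from $8\pi(\mu-\rho)F'+4\pi(\mu+3\rho)fG$ to $\tfrac{96\pi fF}{1-f^{2}}(\mu-\rho)+4\pi(5\rho-\mu)fG$.
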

\begin{proof}
Notice that 
	\begin{eqnarray}\label{normf}
	\dfrac{3F}{4f|\nabla f|^{2}}\left|\nabla|\nabla f|^{2}+8\dfrac{f|\nabla f|^{2}\nabla f}{(1-f^{2})}\right|^{2}&=&\dfrac{3F}{4f|\nabla f|^{2}}|\nabla|\nabla f|^{2}|^{2}+\dfrac{12F}{(1-f^{2})}\langle\nabla|\nabla f|^{2},\,\nabla f\rangle\nonumber\\
	&+&48\dfrac{Ff|\nabla f|^{4}}{(1-f^{2})^{2}}.
\end{eqnarray}
Combining Lemma \ref{lema22} with \eqref{normf}, it yields us
	\begin{eqnarray*}
&&div\left[F(f^{-1}\nabla|\nabla f|^{2}+8\pi(\mu-\rho)\nabla f)+G|\nabla f|^{2}\nabla f\right]-\dfrac{Ff^{3}|C|^{2}}{4|\nabla f|^{2}}\nonumber\\
&=&16\pi F\langle\nabla\mu,\,\nabla f\rangle +\left[8\pi(\mu-\rho)|\nabla f|^{-2}F'+G'-\frac{48Ff}{(1-f^{2})^{2}}\right]|\nabla f|^{4}+\left[\dfrac{F\Delta f}{f}+8\pi(\mu-\rho)F+G|\nabla f|^{2}\right]\Delta f\nonumber\\
&+&\left(\frac{F'}{f}+G-\frac{F\Delta f}{f|\nabla f|^{2}}-\dfrac{12F}{(1-f^{2})}\right)\langle\nabla|\nabla f|^{2},\,\nabla f\rangle+	\dfrac{3F}{4f|\nabla f|^{2}}\left|\nabla|\nabla f|^{2}+8\dfrac{f|\nabla f|^{2}\nabla f}{(1-f^{2})}\right|^{2}.
	\end{eqnarray*}
	
	Now, consider 
	\begin{eqnarray*}
		F(f)=(cf^{2}+d)(1-f^{2})^{-3},\quad G(f)=6\left(\dfrac{F}{1-f^{2}}\right)-2c(1-f^{2})^{-3}
	\end{eqnarray*}
	such that $c,\,d\in\mathbb{R}$.
A straightforward computation assures us that
$$\left\{
\begin{array}{ccc}
\frac{F'}{f}+G=\dfrac{12F}{1-f^{2}}; \\\\
G'=\dfrac{48Ff}{(1-f^{2})^{2}}.\\
\end{array}
\right.
$$

	Therefore, 
\begin{eqnarray*}\label{hiuri2}
&&div\left[F(f^
{-1}\nabla|\nabla f|^{2}+8\pi(\mu-\rho)\nabla f)+G|\nabla f|^{2}\nabla f\right]-\dfrac{Ff^{3}|C|^{2}}{4|\nabla f|^{2}}\nonumber\\
&=&16\pi F\langle\nabla\mu,\,\nabla f\rangle +8\pi(\mu-\rho)F'|\nabla f|^{2}-\frac{F\Delta f}{f|\nabla f|^{2}}\langle\nabla|\nabla f|^{2},\,\nabla f\rangle\nonumber\\
&+&	\dfrac{3F}{4f|\nabla f|^{2}}\left|\nabla|\nabla f|^{2}+8\dfrac{f|\nabla f|^{2}\nabla f}{(1-f^{2})}\right|^{2}+\left[4\pi(3\mu+\rho)F+G|\nabla f|^{2}\right]\Delta f,
	\end{eqnarray*}
where $F'=\frac{2cf}{(1-f^{2})^{3}}+\frac{6f(cf^{2}+d)}{(1-f^{2})^{4}}$. Then, we can rearrange the above equation to get
\begin{eqnarray*}
&&div\left[F(f^{-1}\nabla|\nabla f|^{2}+8\pi(\mu-\rho)\nabla f)+G|\nabla f|^{2}\nabla f\right]-\dfrac{Ff^{3}|C|^{2}}{4|\nabla f|^{2}}\nonumber\\
&=&16\pi F\langle\nabla\mu,\,\nabla f\rangle +[8\pi(\mu-\rho)F'+4\pi(\mu+3\rho)fG]|\nabla f|^{2}-\frac{F\Delta f}{f|\nabla f|^{2}}\langle\nabla|\nabla f|^{2},\,\nabla f\rangle\nonumber\\
&+&	\dfrac{3F}{4f|\nabla f|^{2}}\left|\nabla|\nabla f|^{2}+8\dfrac{f|\nabla f|^{2}\nabla f}{(1-f^{2})}\right|^{2}+4\pi\left(3\mu+\rho\right)F\Delta f\nonumber\\
&=&16\pi F\langle\nabla\mu,\,\nabla f\rangle +\dfrac{96\pi{fF}}{1-f^{2}}|\nabla f|^{2}(\mu-\rho)-\frac{F\Delta f}{f|\nabla f|^{2}}\langle\nabla|\nabla f|^{2},\,\nabla f\rangle\nonumber\\
&+&	\dfrac{3F}{4f|\nabla f|^{2}}\left|\nabla|\nabla f|^{2}+8\dfrac{f|\nabla f|^{2}\nabla f}{(1-f^{2})}\right|^{2}+4\pi\left(3\mu+\rho\right)F\Delta f+4\pi(5\rho-\mu)fG|\nabla f|^{2}.
	\end{eqnarray*}
Considering $\mu=\mu(f)$ and \eqref{masood}, we obtain the desired result.
\end{proof}

\section{Proof of the main results}

 \noindent {\bf Proof of Theorem \ref{princtheorem}:}
 We start the demonstration by proving that from \eqref{principaleq} we have
\begin{eqnarray*}\label{mozart}
X(|\nabla f|^2)&=&2\langle\nabla_X\nabla f,\nabla f\rangle\nonumber\\&=&2\nabla^2f(X,\nabla f)\nonumber\\&=&2fRic(X,\nabla f)-8\pi(\mu-\rho)f\langle X,\nabla f\rangle=0,
\end{eqnarray*} for any $X\in\mathfrak{X}(\Sigma).$ Since $f=0$ at $\Sigma$, $\kappa=|\nabla f|$ is a non null constant on $\Sigma$ (by lemma 1 in \cite{leandro2019} we have that $\nabla f$ does not vanish at $\Sigma$). 

In what follows, $\eta$ and $N=\dfrac{-\nabla f}{|\nabla f|}$ are the normal vector fields of the sphere $\mathbb{S}$ and interior boundary $\Sigma$, respectively. Here we assume $f$ and $g$ extend smoothly to the interior boundary $\Sigma$ (see Definition \ref{def2}). From Lemma \ref{lema222200111} and \eqref{lapla} we can infer that
\begin{eqnarray}\label{fern1}
\int_{M}div\left[F(f^{-1}\nabla|\nabla f|^{2}+8\pi(\mu-\rho)\nabla f)+G|\nabla f|^{2}\nabla f\right]dv\geq-4\pi\int_{M}\frac{(\mu+3\rho)F}{|\nabla f|^{2}}\langle\nabla|\nabla f|^{2},\,\nabla f\rangle dv.
\end{eqnarray}
From now on we apply Stokes's theorem in the above inequality but first notice that from \eqref{principaleq} we have
\begin{eqnarray*}
F(f^{-1}\nabla|\nabla f|^{2}+8\pi(\mu-\rho)\nabla f)+G|\nabla f|^{2}\nabla f=2FRic(\nabla f)+G|\nabla f|^{2}\nabla f.
\end{eqnarray*}
Therefore, from the fact that $f=0$ at $\Sigma$ and from the asymptotic conditions at the end $E$ of the manifold $M^3$, an integration of the above equation yields to

\begin{eqnarray*}
&&\int_{M}div\left[F(f^{-1}\nabla|\nabla f|^{2}+8\pi(\mu-\rho)\nabla f)+G|\nabla f|^{2}\nabla f\right]dv\nonumber\\
&&=\int_{\Sigma}\langle 2dRic(\nabla f)+2\left(3d-c\right)|\nabla f|^{2}\nabla f,\,\dfrac{-\nabla f}{|\nabla f|}\rangle ds\nonumber\\
&&+\lim_{r\rightarrow\infty}\int_{\mathbb{S}(r)}\langle Ff^{-1}\nabla|\nabla f|^{2}+8\pi(\mu-\rho)F\nabla f+G|\nabla f|^{2}\nabla f,\,\eta\rangle ds\nonumber\\
&&=-d|\nabla f|\int_{\Sigma}2 Ric(N,\,N)ds-2\left(3d-c\right)|\nabla f|^{3}Area(\Sigma)\nonumber\\
&&+\lim_{r\rightarrow\infty}\int_{\mathbb{S}(r)}\langle Ff^{-1}\nabla|\nabla f|^{2}+8\pi(\mu-\rho)F\nabla f+G|\nabla f|^{2}\nabla f,\,\eta\rangle ds.\nonumber\\
\end{eqnarray*}
A similar integration can be found in \cite[equation (3.24)]{massod21987} and \cite[equation (3.3)]{robinson1977}.

Since $\Sigma$ is umbilic (cf. \cite{coutinho2020}), i.e., the second fundamental formula $h^{\Sigma}=0$, and $R=16\pi\mu=0$ in $\Sigma$ (cf. Theorem 1 in \cite{leandro2019}, which proved $\mu=\rho=0$ at $\Sigma$ if $\mu$ and $\rho$ are non negative functions) from the Gauss equation we have
\begin{eqnarray*}
\label{EscalarBordoM}
R^{\Sigma}&=&-2Ric(N,\,N).
\end{eqnarray*}
Thus,
\begin{eqnarray}\label{f1}
&&\int_{M}div\left[F(f^{-1}\nabla|\nabla f|^{2}+8\pi(\mu-\rho)\nabla f)+G|\nabla f|^{2}\nabla f\right]dv\nonumber\\
&&=d\kappa\int_{\Sigma} R^{\Sigma}ds-2\left(3d-c\right)\kappa^{3}Area(\Sigma)\nonumber\\
&&+\lim_{r\rightarrow\infty}\int_{\mathbb{S}(r)}\langle Ff^{-1}\nabla|\nabla f|^{2}+8\pi(\mu-\rho)F\nabla f+G|\nabla f|^{2}\nabla f,\,\eta\rangle ds.
\end{eqnarray}

The easiest way to evaluate the two-dimensional integral at infinity that arises after the application of Stokes' theorem is to use spherical polar coordinates to describe the asymptotic flatness of the three-metric. Then, to evaluate asymptotically on a sphere of radius $r$ as $r$ tends to infinity - we will only need to keep leading terms. Using the facts that $g_{rr}$ tends to one and $f$ tends to $1-m/r$, we will compute the integral ahead.

 First of all, from Definition \ref{defA} and Definition \ref{def2} we have
\begin{eqnarray*}
	&&0\leq4\pi\int_{M}(\mu+3\rho)f dv=\int_{M}\Delta f dv=\int_{\Sigma} \langle\nabla f,\,\frac{-\nabla f}{|\nabla f|}\rangle ds+\displaystyle\lim_{r\rightarrow\infty}\int_{\mathbb{S}(r)}\langle\nabla f,\,\eta\rangle ds\nonumber\\
	&&=-\kappa Area(\Sigma)+m\displaystyle\lim_{r\rightarrow\infty}\dfrac{1}{r^{2}}\int_{\mathbb{S}(r)}ds=-\kappa Area(\Sigma)+4\pi{m},
\end{eqnarray*}
where $\eta$ is the exterior normal vector field of $\mathbb{S}$. So, we can conclude that
\begin{eqnarray}\label{areaasymp}
\kappa Area(\Sigma)\leq4\pi{m}.
\end{eqnarray}
Since $f>0$ in $M$, equality holds if and only if $\mu+3\rho=0$.

On the other hand, we can consult \cite{massod21987,robinson1977} to see that
\begin{eqnarray}\label{f2}
    \lim_{r\rightarrow\infty}\int_{\mathbb{S}(r)}\langle Ff^{-1}\nabla|\nabla f|^{2}+8\pi(\mu-\rho)F\nabla f+G|\nabla f|^{2}\nabla f,\,\eta\rangle ds \nonumber\\
    = \dfrac{-(c+d)\pi}{2m}+    \lim_{r\rightarrow\infty}\int_{\mathbb{S}(r)}\langle8\pi(\mu-\rho)F\nabla f,\,\eta\rangle ds\nonumber\\
    =\dfrac{-(c+d)\pi}{2m}+   \lim_{r\rightarrow\infty}\int_{\mathbb{S}(r)}8\pi(\mu-\rho)F\frac{m}{r^{2}} ds\nonumber\\
    =\dfrac{-(c+d)\pi}{2m}+ 32m\pi^{2}\lim_{r\rightarrow\infty}(\mu-\rho)F.
\end{eqnarray}
Therefore, combining \eqref{fern1}, \eqref{f1} and \eqref{f2} we get
\begin{eqnarray}\label{massa}
	&&d\kappa\int_{\Sigma}R^{\Sigma}ds-2\left(3d-c\right)\kappa^{3}Area(\Sigma)\nonumber\\
	&&\geq\dfrac{(c+d)\pi}{2m} - \left[32m\pi^{2}\lim_{r\rightarrow\infty}(\mu-\rho)F+4\pi\int_{M}\frac{(\mu+3\rho)F}{|\nabla f|^{2}}\langle\nabla|\nabla f|^{2},\,\nabla f\rangle dv\right].
\end{eqnarray}
Considering the asymptotic conditions we also have
\begin{eqnarray*}
\nabla f = f'\nabla r\quad\mbox{and}\quad \nabla|\nabla f|^{2}=2f'f''\nabla r;\quad r=|x|\rightarrow\infty.
\end{eqnarray*}
Notice that
\begin{eqnarray*}\label{dali}
\langle\nabla|\nabla f|^{2},\,\nabla f\rangle=2(f')^{2}f''.
\end{eqnarray*}

Then, by Definition \ref{def2}, we have $ \omega''\leq 2mr^{-3}$. So,
	 \begin{eqnarray}\label{beethoven}
	\frac{F\Delta f}{f|\nabla f|^{2}}\langle\nabla|\nabla f|^{2},\,\nabla f\rangle=8\pi{f}''(\mu+3\rho)F=\underbrace{8\pi(\mu+3\rho)F}_{\geq0}\underbrace{\left(\omega''-\frac{2m}{r^{3}}\right)}_{\leq0}\leq0.
 \end{eqnarray}
Additionally, assuming that $\mu-\rho=o(r^{-4})$, for a sufficiently large $r$, we get 
 \begin{eqnarray}\label{limittenso}
 \lim_{r\rightarrow\infty}(\mu-\rho)F=\lim_{r\rightarrow\infty}\dfrac{(\mu-\rho)[cf^{2}+d]}{(1-f^{2})^{3}}=\lim_{r\rightarrow\infty}\dfrac{r^{4}(\mu-\rho)[c(r-m)^{2}+dr^{2}]}{m^{3}(2r-m)^{3}}=0.
 \end{eqnarray}
Now, we need to consider two special cases: (I) $c=1$ and $d=0$;\,(II) $d=1$ and $c=-1$.

\

 \noindent {\bf Case (I):} Considering $c=1$ and $d=0$, from \eqref{massa} we have
 \begin{eqnarray}\label{caseI}
	\kappa^{3}Area(\Sigma)\geq\dfrac{\pi}{4m}. 
\end{eqnarray}
 Considering  \eqref{areaasymp}  and combining with the above inequality we get $$\frac{1}{4m}\leq\kappa.$$

 Therefore, using again \eqref{areaasymp} we have that the isoperimetric (Penrose) inequality holds, i.e., $Area(\Sigma)\leq16m^{2}\pi$. This result can be interpreted as an optimal lower bound for the mass $m$. In fact, from \eqref{areaasymp} we have $Area(\Sigma)\leq \frac{4m\pi}{\kappa}\leq16m^{2}\pi$. So, we can infer that $\kappa=|\nabla f|\Big|_{\Sigma}=\dfrac{1}{4m}$ (otherwise we will have a better estimate for $Area(\Sigma)$ than the Penrose estimate), and again from \eqref{caseI} we get
$$\kappa^{2}Area(\Sigma)\geq\pi.$$

  \noindent {\bf Case (II):} Considering $c=-1$ and $d=1$, from \eqref{massa} we have
\begin{eqnarray*}
	&&\kappa\int_{\Sigma}R^{\Sigma}ds-8\kappa^{3}Area(\Sigma)\geq 0
\end{eqnarray*} 

So, from the Gauss-Bonnet theorem we obtain
\begin{eqnarray*}
	2\pi\mathfrak{X}(\Sigma)=\int_{\Sigma}R^{\Sigma}ds\geq8\kappa^{2}Area(\Sigma)>0,
\end{eqnarray*} 
where $\mathfrak{X}(\Sigma)$ is the Euler characteristic of $\Sigma$. Thus, we can conclude that $\mathfrak{X}(\Sigma)$ is equal to $1$ or $2$. That is, $$\pi\geq\kappa^{2}Area(\Sigma).$$
  
  \noindent {\bf Conclusion:}
 Hence, {\bf Case (I)} and {\bf Case (II)} are compatible if and only if the right-hand side of the equality in Lemma \ref{lema222200111} is identically zero (cf. \cite{robinson1977,robinson2009}). That is, 
\begin{eqnarray}\label{bom}
0&=&\dfrac{Ff^{3}|C|^{2}}{4|\nabla f|^{2}}-\frac{F\Delta f}{f|\nabla f|^{2}}\langle\nabla|\nabla f|^{2},\,\nabla f\rangle
+\dfrac{3F}{4f|\nabla f|^{2}}\left|\nabla|\nabla f|^{2}+8\dfrac{f|\nabla f|^{2}\nabla f}{(1-f^{2})}\right|^{2}\nonumber\\
&+&4\pi\left(3\mu+\rho\right)F\Delta f+\dfrac{96\pi{fF}}{1-f^{2}}|\nabla f|^{2}(\mu-\rho)+4\pi|\nabla f|^{2}\left[(5\rho-\mu)fG+4F\dfrac{d\mu}{df}\right]\nonumber\\
&=&\dfrac{Ff^{3}|C|^{2}}{4|\nabla f|^{2}}
+\dfrac{3F}{4f|\nabla f|^{2}}\left|\nabla|\nabla f|^{2}+8\dfrac{f|\nabla f|^{2}\nabla f}{(1-f^{2})}\right|^{2}+8\pi(\mu+3\rho)F\left(\frac{2m}{r^{3}}-\omega''\right)\nonumber\\
&+&4\pi\left(3\mu+\rho\right)F\Delta f+\dfrac{96\pi{fF}}{1-f^{2}}|\nabla f|^{2}(\mu-\rho)+4\pi|\nabla f|^{2}\left[(5\rho-\mu)fG+4F\dfrac{d\mu}{df}\right]\geq0.
\end{eqnarray}

Therefore, from the above identity we have two possibilities, either the Cotton tensor is zero and $\mu=\rho=0$, or the Cotton tensor is zero and $f$ is  constant. 

In the first case we get that $(M^{3},\,g,\,f)$ is conformally flat and the static space is vacuum. 
Then, from \cite{bunting1987,robinson1977} we have that $(M^3,\,g)$ is isometric to Schwarzschild. If $f$ is a constant function, we have that $(M^3,\,g)$ is conformally flat and, from \eqref{eqsemtraco}, an Einstein manifold, so it has constant curvature (cf. \cite{kobayashi1980}). However, from \eqref{lapla} we get $\mu+3\rho=0$, and since we assume that $\mu$ and $\rho$ are non negative, we must have $\mu=\rho=0$. Thus, the only possibility is $(M^3,\,g)$ to be isometric to $\mathbb{R}^3$ with the Euclidean metric.
\hfill$\Box$

\

\

\

 \noindent {\bf Acknowledgment:}
 {\it
This work was done while the third author was a postdoc at Instituto de Matemática e Estatística, Universidade Federal de Goiás, Brazil. He is grateful to the hosted institution for the scientific atmosphere that it has provided during his visit. 
}

\


\begin{thebibliography}{99}

\bibitem{agostiniani2017}{Agostiniani, Virginia; Mazzieri, Lorenzo.}  {\it On the geometry of the level sets of bounded static potentials.} Comm. Math. Phys. 355 (2017), no. 1, 261-301. MR3670734


\bibitem{anderson2019} {Andersson, Lars; Burtscher, Annegret Y.}  {\it On the asymptotic behavior of static perfect fluids.} Ann. Henri Poincaré 20 (2019), no. 3, 813-857. MR3916963

 \bibitem{andre1964}{Avez, André.} {\it Le $ds^2$ de Schwarzschild parmi les $ds^2$ stationnaires.} Ann. Inst. H. Poincaré Sect. A (N.S.) 1 (1964), 291–300. MR0197168
 
\bibitem{beig1991}{Beig, R.; Simon, W.} {\it On the spherical symmetry of static perfect fluids in general relativity.} Lett. Math. Phys. 21 (1991), no. 3, 245-250. MR1102135

\bibitem{beig1992} {Beig, R.; Simon, W.} {\it On the uniqueness of static perfect-fluid solutions in general relativity.} Comm. Math. Phys. 144 (1992), no. 2, 373-390. MR1152378

\bibitem{bunting1987} {Bunting, Gary L.; Masood-ul-Alam, A. K. M.} {\it Nonexistence of multiple black holes in asymptotically Euclidean static vacuum space-time.} Gen. Relativity Gravitation 19 (1987), no. 2, 147-154. MR0876598




\bibitem{coutinho2020} {Coutinho, F.; Diógenes, R.; Leandro, B.; Ribeiro, E., Jr.}  {\it Static perfect fluid space-time on compact manifolds.} Classical Quantum Gravity 37 (2020), no. 1, 015003, 23 pp. MR4054632 


 \bibitem{heinzle2003} {Heinzle, J. Mark; Uggla, Claes.} {\it Newtonian stellar models.} Ann. Physics 308 (2003), no. 1, 18–61. MR2017988 

\bibitem{huisken2001} {Huisken, Gerhard; Ilmanen, Tom.} {\it The inverse mean curvature flow and the Riemannian Penrose inequality.} J. Differential Geom. 59 (2001), no. 3, 353-437. MR1916951 

\bibitem{kobayashi1980} {Kobayashi, Osamu; Obata, Morio.} {\it Conformally-flatness and static space-time.} Manifolds and Lie groups (Notre Dame, Ind., 1980), pp. 197–206, Progr. Math., 14, Birkhäuser, Boston, Mass., 1981.
MR0642858 

\bibitem{kunzle1971} {K\"unzle, H. P.}  {\it On the spherical symmetry of a static perfect fluid.} Comm. Math. Phys. 20 (1971), 85-100. MR0275833 

\bibitem{lindblom1980}{Lindblom, Lee.} {\it Some properties of static general relativistic stellar models.} Journal of Math. Phys. 21 (1980), no. 6, 1455-1459.



\bibitem{leandro2019} {Leandro, B.; Pina, H;   Ribeiro, E., Jr.}  {\it Volume growth for geodesic balls of static vacuum space on 3-manifolds.} Ann. Mat. Pura Appl.(4) 199 (2020), no. 3, 863-873. MR4102794

\bibitem{massod21987} {Masood-ul-Alam, A. K. M.} {\it The topology of asymptotically Euclidean static perfect fluid space-time.} Comm. Math. Phys. 108 (1987), no. 2, 193-211. MR0875298 

\bibitem{massod1987} {Masood-ul-Alam, A. K. M.} {\it On spherical symmetry of static perfect fluid spacetimes and the positive-mass theorem.} Classical Quantum Gravity 4 (1987), no. 3, 625-633. MR0884598

\bibitem{massod1988} {Masood-ul-Alam, A. K. M.} {\it A proof of the uniqueness of static stellar models with small $d\rho/dp$.} Classical Quantum Gravity 5 (1988), no. 3, 409-421. MR0929480

\bibitem{massod2007} {Masood-ul-Alam, A. K. M.} {\it Proof that static stellar models are spherical.} Gen. Relativity Gravitation 39 (2007), no. 1, 55-85. MR2322510

\bibitem{oneill1983}{O'Neill, Barrett.} {\it Semi-Riemannian Geometry With Applications to Relativity.} Pure and Applied Mathematics, 103. Academic Press, Inc. [Harcourt Brace Jovanovich, Publishers], New York, 1983. xiii+468 pp. ISBN: 0-12-526740-1. MR0719023 



\bibitem{robinson1977} {Robinson, David C.} {\it A simple proof of the generalization of Israel's theorem.} Gen. Relativity Gravitation 8 (1977), 695-698.


\bibitem{robinson2009} {Robinson, David C.} {\it Four decades of black hole uniqueness theorems.} The Kerr spacetime, 115–143, Cambridge Univ. Press, Cambridge, 2009. MR2789145





\bibitem{yau1982} {Yau, S.-T.} {\it  Seminar on Differential Geometry.} Papers presented at seminars held during the academic year 1979–1980. Edited by Shing-Tung Yau. Annals of Mathematics Studies, 102. Princeton University Press, Princeton, N. J.; University of Tokyo Press, Tokyo, 1982. ISBN: 0-691-08268-5; 0-691-08296-0. MR0645728

\bibitem{yadav1991} {Yadav, R. B. S.; Saini, S. L.} {\it Static spherically-symmetric perfect fluids with pressure equal to energy density.} Astrophys. Space Sci. 186 (1991), no. 2, 331–336. MR1141723

\end{thebibliography}
\end{document}